\documentclass[a4paper,12pt]{article}     
\usepackage{amsmath,amscd,amssymb}        
\usepackage{latexsym}                     
\usepackage[english, german]{babel}                

\usepackage{theorem}                 

\textheight 21cm
\textwidth 14cm
\oddsidemargin 7mm

\newtheorem{proposition}{Proposition}


\theorembodyfont{\rmfamily}

\newenvironment{definition}
{\smallskip\noindent{\bf Definition\/}:}{\smallskip\par}

\newenvironment{remarks}
{\smallskip\noindent{\bf Remarks\/}.}{\smallskip\par}

%
\newenvironment{proof}{\begin{ProofwCaption}{Proof}}{\end{ProofwCaption}}
\newenvironment{proof*}[1]{\begin{ProofwCaption}{{#1}}}{\end{ProofwCaption}}
\newenvironment{ProofwCaption}[1]%
 {\addvspace\theorempreskipamount \noindent{\it #1.}\rm}%
 {\qed \par \addvspace\theorempostskipamount}
\newcommand{\qedsymbol}{\mbox{$\Box$}}
\newcommand{\qed}{\hfill\qedsymbol}


\newcommand{\Q}{{\mathbb Q}}

\newcommand{\R}{{\mathbb R}}
\newcommand{\Z}{{\mathbb Z}}

\newcommand{\RR}{{\mathcal R}}
\def\GG{{\mathcal{G}}}
\newcommand{\eps}{\varepsilon}

\newcommand{\ind}{{\rm ind}\,}

\newcommand{\chiorb}{\chi^{\rm orb}}
\newcommand{\chiun}{\chi^{\rm un}}

\title{Index of a singular point of a vector field or of a 1-form on an orbifold
\footnote{Math. Subject Class.: 57R18, 55M35, 58K45.
Keywords: orbifold, vector field, 1-form, index, additive topological invariants.}}
\author{S.M.~Gusein-Zade
\thanks{
Address: Moscow State University, Faculty of mechanics and mathematics, GSP--1, Moscow, 119991, Russia.
E-mail: sabir\symbol{'100}mccme.ru}
}
\date{}

\begin{document}
\selectlanguage{english}

\maketitle

\begin{abstract}
Indices of singular points of a vector field or of a 1-form on a smooth manifold are closely related with the
Euler characteristic through the classical Poincar\'e--Hopf theorem. Generalized Euler characteristics
(additive topological invariants of spaces with some additional structures) are sometimes related with
corresponding analogues of indices of singular points.
Earlier there was defined a notion of the universal Euler characteristic of an orbifold. It takes values
in a ring $\RR$, as an abelian group freely generated by the generators, corresponding to the isomorphism
classes of finite groups. Here we define the universal index of an isolated singular point of a vector field
or of a 1-form on an orbifold as an element of the ring $\RR$. For this index, an analogue of the Poincar\'e--Hopf
theorem holds.
\end{abstract}

\section{Introduction}\label{sec:Intro}
A classical invariant of a singular point of a vector field or of a 1-form on a smooth manifold is its index.
The index is closely related with the Euler characteristic through the Poincar\'e--Hopf theorem: the sum of
indices of the (isolated) singular points of a vector field or of a 1-form on a closed (compact, without
boundary) manifold is equal to the Euler characteristic of the manifold. The notion of the index has
generalizations to vector fields and 1-forms on singular (e.~g., semianalytic) varieties which are related
in a certain form with the Euler characteristic.

The Euler characteristic (defined as the alternating sum of the ranks of the cohomology groups with compact
support) is the only, up to proportionality, additive topological invariant of smooth manifolds: see, e.~g.,
\cite{V-Euler}. For manifolds or topological spaces with additional structures one has other additive topological
invariants~--- generalized Euler characteristics. For example, for spaces with actions of a finite group $G$,
there is defined its {\em equivariant Euler characteristic} as an element of the Burnside ring $A(G)$ of the group
$G$: \cite{TtD}. The equivariant Euler characteristic is a universal additive topological invariant of manifolds
with actions of the group $G$ or of equivariant (in the sense of \cite{TtD}) $G$-CW-cmplexes: any additive
topological invariant with values in a group $\mathcal{A}$ is obtained from the equivariant Euler characteristic
via a group homomorphism $A(G)\to\mathcal{A}$. For manifolds or topological spaces with actions of the group $G$,
there are also defined {\em the orbifold Euler characteristic} $\chiorb$ (see, e.~g., \cite{AS}, \cite{HH}) and
its higher order analogues $\chi^{(k)}$ (\cite{AS}, \cite{BrF}, \cite{Tamanoi}). They can be regarded as
specifications of the equivariant Euler characteristic, i.~e.\ they are given by group homomorphisms $\chiorb$ and
$\chi^{(k)}$ from the group $A(G)$ to $\Z$. One has notions of equivariant indices of singular points of
$G$-invariant vector fields or 1-forms on $G$-manifolds or on singular $G$-spaces as elements of the Burnside ring
related with the equivariant Euler characteristic through analogues of the Poincar\'e--Hopf theorem: \cite{EG-EJM}.

A generalization of the notion of a manifold is the notion of a (real) orbifold (sometimes called a $V$-manifold):
see, e.~g., \cite{Satake1}, \cite[Appendix]{ChenRuan}. Locally an orbifold is the quotient
of a vector space by a (linear) action of a finite group (generally speaking its own for each point). In the paper
\cite{Satake2}, there were defined notions of ``the Euler characteristic of an orbifold as an orbifold'' (the
Euler--Satake characteristic) which is a rational number and of the index of a singular point of a vector field
on an orbifold (also as a rational number) so that the sum if the indices of the (isolated) singular points of a
vector field on a closed (compact, without boundary) orbifold is equal to its Euler--Satake characteristic. In
\cite{V-Euler}, there was defined {\em the universal Euler characteristic} (a universal additive topological
invariant) of orbifolds. It takes values in a ring $\RR$, as an abelian group freely generated by the elements,
corresponding to the isomorphism classes of finite groups. The ring $\RR$ is the ring of polynomials in the variables
corresponding to the isomorphism classes of indecomposable finite groups.

Here we define {\em the universal index} of an isolated singular point of a vector field or of a 1-form
on an orbifold as an element of the ring $\RR$. For this index, an analogue of the Poincar\'e--Hopf
theorem holds. One can say that the construction unites the ideas from the papers \cite{EG-EJM} and \cite{V-Euler}. 
For this index, one has an analogue of the Poincar\'e--Hopf theorem: the sum of the universal indices of the
(isolated) singular points on a closed orbifold is equal to the universal Euler characteristic of the orbifold.
The Satake index of a singular point of a vector field on an orbifold is a specification of the universal one:
it is obtained from the latter by the application of the homomorphism $\RR\to\Q$ which sents the generator
of $\RR$ corresponding to the isomorphism class of a finite group $G$ to $\frac{1}{|G|}$, wher $|G|$ is the number of
elements of $G$.

\section{The universal Euler characteristic of orbifolds}\label{sec:Orbi}
Initially a definition of an orbifold (called a $V$-manifold there) was given in \cite{Satake1}.
In more convenient terms one can find it, e.~g., in \cite[Appendix]{ChenRuan}, \cite{V-Euler}.
One can alco define the notion of an orbifold with boundary: see \cite{Satake2}, \cite[Appendix]{ChenRuan}. 

The ring $\RR$ of values of the universal Euler characteristic of orbifolds can be defined as the Grothendieck group
of finite $G$-sets for (all) finite groups $G$ with an ``induction relation''.
More precisely this means the following. For a subgroup $H$ of a finite group $G$ one has an {\em induction
operation} ${\rm Ind}_H^G$ which converts $H$-spaces into $G$-spaces. For an $H$-space $X$ the space
${\rm Ind}_H^G X$ is defined as the quotient of the Cartesian product $G\times X$ by the equivalence relation:
$(g_1,x_1)\sim(g_2, x_2)$ if there exists $h\in H$ such that $g_2=g_1 h$, $x_2=h^{-1}x_1$. The action of the group
$G$ on ${\rm Ind}_H^G X$ is defined in the natural way. The group $\RR$ is generated by the classes $[(X,G)]$
of finite $G$-sets for all finite groups $G$ modulo the relations:
\begin{enumerate}
 \item[1)] if $(X_1,G_1)\cong(X_2,G_2)$, i.~e.\ if there exist a bijective map $F:X_1\to X_2$ and a group isomorphism
 $\varphi:G_1\to G_2$ such that $F(gx)=\varphi(g)F(x)$ for $x\in X_1$, $g\in G_1$, then
 $[(X_1,G_1)]=[(X_2,G_2)]$;
 \item[2)] if $Y$ is a $G$-invariant subset of a (finite) $G$-set $X$, then
 $[(X,G)]=[(Y,G)]+[(X\setminus Y,G)]$;
 \item[3)] if $H$ is a subgroup of a finite group $G$, then, for a finite $H$-set $X$, one has
 $[(X,H)]=[({\rm Ind}_H^G X,G)]$. 
\end{enumerate}
The multiplication in $\RR$ is defined by the Cartesian product: 
$$
[(X,G)]\cdot[(X',G')]=[(X\times X',G\times G')]\,.
$$

Let us denote by $\GG$ the set of the isomorphism classes of finite groups.
It is not difficult to see that $\RR$ is the free $\Z$-module with the generators $T^{\mathfrak G}$ corresponding
to the isomorphism classes ${\mathfrak G}\in \GG$ of finite groups. The generator $T^{\mathfrak G}$ is represented
by the one-point set with the (unique) action of a representative $G$ of the class ${\mathfrak G}$. The
Krull--Schmidt theorem implies that $\RR$ is the ring of polynomials in the variables $T^{\mathfrak G}$ corresponding
to the isomorphism classes of finite indecomposable groups.

For a point $x$ of an orbifold $Q$, let us denote by ${\mathfrak G}_x\in\GG$ the class of the isotropy group of
the point $x$ (this group is defined up to isomorphism). For ${\mathfrak G}\in \GG$, let
$Q^{({\mathfrak G})}=\{x\in Q: {\mathfrak G}_x={\mathfrak G}\}$. One can see that the orbifold $Q^{({\mathfrak G})}$
is a global quotient (by a free action of a representative $G$ of the class ${\mathfrak G}$). Moreover, its
reduction is a usual ($C^{\infty}$-) manifold (with the action of the trivial group).
The representation of $Q$ as the union of the subsets $Q^{({\mathfrak G})}$ is its stratification. (We allow
stratifications with non-connected strata.) This stratification is obviously a Whitney stratification.
A homeomorphism of two orbifolds $Q_1$ and $Q_2$ is a homeomorphism of the corresponding topological spaces
preserving the described stratification, i.~e.\ mapping $Q_1^{({\mathfrak G})}$ to $Q_2^{({\mathfrak G})}$.

In order for an Euler characteristic of an orbifold (in particular, the universal one) to make sense, one needs
that the orbifold possesses certain ``finiteness properties''. For example, one can assume that it is the 
interior of a compact orbifold with boundary. For short such orbifolds are called {\em tame}. In the framework of
this paper we shall assume that all the orbifolds under consideration are tame.

\begin{definition}\label{def:universal-euler}
{\em The universal Euler characteristic} of a (tame) orbifold $Q$ is defined by the equation
\begin{equation}\label{eqn:universal-euler}
 \chiun(Q)=\sum_{{\mathfrak G}\in \GG} \chi(Q^{({\mathfrak G})}) T^{\mathfrak G} \in \RR.
\end{equation}
\end{definition}

Additive and multiplicative invariants of (tame) orbifolds are, in particular, the usual Euler characteristic $\chi$,
the Euler--Satake characteristic $\chi^{\rm ES}$, the orbifold Euler characteristic $\chi^{\rm orb}$ and its
higher order analogues $\chi^{(k)}$, $k>1$.
The universal Euler characteristic $\chiun$ is an additive and multiplicative invariant of orbifolds. Moreover,
in \cite{V-Euler} it is shown that the universal Euler characteristic is a universal additive topological invariant
of orbifolds in the sense that any additive topological invariant $I$ with values in a group $\mathcal{A}$ is
represented in the form $I=\varphi\circ\chiun$ for a unique group homomorphism $\varphi:\RR\to\mathcal{A}$.
If $\mathcal{A}$ is a ring and the invariant $I$ is multiplicative, then $\varphi$ is a ring homomorphism.
For the listed invariants the corresponding homomorphisms (which we donote by the same symbols as the invariants
themselves) are defined by the equations $\chi(T^{\mathfrak G})=1$, $\chi^{\rm ES}(T^{\mathfrak G})=\frac{1}{|G|}$,
$\chi^{\rm orb}(T^{\mathfrak G})=\chi^{\rm orb}(G/G,G)$, $\chi^{(k)}(T^{\mathfrak G})=\chi^{(k)}(G/G,G)$ for
a representative $G$ of the class ${\mathfrak G}$. Let us recall that, for a topological $G$-space $X$ its orbifold
Euler characteristic and its higher order analogues are defined by the equations (see, e.~g., \cite{AS},
\cite{Tamanoi})
$$
\chi^{\rm orb}(X,G)=\frac{1}{|G|}\sum_{{(g_1,g_2)\in G^2:}\atop{g_1g_2=g_2g_1}}
\chi\left(X^{\langle g_1,g_2\rangle}\right),
$$
$$
\chi^{(k)}(X,G)=\frac{1}{|G|}\sum_{{(g_1, \ldots g_{k+1})\in G^{k+1}:}\atop{g_ig_j=g_jg_i}}
\chi\left(X^{\langle g_1, \ldots g_{k+1}\rangle}\right),
$$
where $\langle \ldots \rangle$ is the subgroup of $G$ generated by the corresponding elements, $X^H$ is the
fixed points set of the subgroup $H\subset G$. (If the group $G$ is abelian, then $\chi^{\rm orb}(G/G,G)=|G|$,
$\chi^{(k)}(G/G,G)=|G|^k$. The usual Euler characteristic, the Euler--Satake characteristic and the orbifold
Euler characteristic can be interpreted as higher order characteristics $\chi^{(k)}$ with $k=0,\ -1$ and 1
respetively.)

For a finite group $H$, there is defined the group (!) homomorphism $r_H$ from the Burnside ring $A(H)$ to the ring
$\RR$ mapping the generator $[H/G]$ of the ring $A(H)$ ($G$ is a subgroup of $H$) to $T^{[G]}$
($[G]\in\GG$ is the isomorphism class of the group $G$). The homomorphism $r_H$ is not, in general, injective.
If the orbifold $Q$ is a global quotient: $Q=M/H$ for a smooth $H$-manifold $M$, then
$\chi^{\rm un}(Q)=r_H\left(\chi^H(M)\right)$, where $\chi^H(M)\in A(H)$ is the equivariant Euler characteristic
of the $H$-manifold $M$ in the sense of \cite{TtD} (see also \cite{SGZ-2017}).

\section{The index of a singular point on an orbifold}\label{sec:Index}
An ($n$-dimensional) orbifold $Q$ is locally (in a neighbourhood of a point $p_0$) isomorphic to the
semialgebraic set $\R^n/H$ (in a neighbourhood of its point $0$) for a linear action of a finite group $H$
on the space $\R^n$. One may assume that $\R^n/H$ is embedded into an ambient space $\R^N$. The quotient space
$\R^n/H$ (as the orbifold $Q$ itself) has a natural stratification by the subsets $(\R^n/H)^{({\mathfrak G})}$,
${\mathfrak G}\in \GG$. (The subset $(\R^n/H)^{({\mathfrak G})}$ can be nonempty only if the class ${\mathfrak G}$
contains a subgroup of $H$.)

A vector field on the orbifold $Q$ is defined locally as a (continuous) vector field on the quotient space $\R^n/H$
coordinated with the described stratification, i.~e.\ the vector at a point $x\in Q$ is tangent to the stratum
$(\R^n/H)^{(G_x)}$ containing the point $x$. (A vector tangent to a zero-dimensional stratum is, of course, equal
to zero.) A 1-form on the orbifold $Q$ is defined locally as a 1-form on the quotient space $\R^n/H$.
A germ of a vector field on $(\R^n/H, 0)$ is called {\em radial} if, in a sufficiently small punctured
neighbourhood of the point $0$, it is transversal to the spheres $S_{\varepsilon}^{N-1}$ centred at the origin
in $\R^N$ and is directed outside these spheres. An example of a radial vector field on $(\R^n/H, 0)$ is the
direct image of the vector field $\sum_{i=1}^n x_i \frac{\partial\ \ }{\partial x_i}$. A germ of a 1-form on
$(\R^n/H, 0)$ is called {\em radial}, if, for any analytic curve $\gamma:(\R_{\ge 0}, 0)\to (\R^n/H, 0)$, the 1-form
is positive on the vector $\gamma_{*}\frac{\partial\ }{\partial t}$ for sufficiently small positive
$t\in(\R_{\ge 0},0)$. An example of a radial 1-form on $(\R^n/H, 0)$ is the restriction of the form
$\sum_{i=1}^N x_i d x_i$ on $\R^N$.

For $a\in \R^n$, let $\R^n=(\R^n)^{H_a}\oplus (\R^n)^{H_a\bot}$ be a decomposition of the $H_a$-module $\R^n$
into the direct sum of its part fixed respect to the subgroup $H_a$ and of its complement. This way a neighbourhood
of the corresponding point $x$ in the quotient space $\R^n/H$ is isomorphic to the Cartesian product
$(\R^n)^{H_a}\times \left((\R^n)^{H_a\bot}/H_a\right)$.

We shall describe the corresponding constructions for vector fields. Changes for 1-forms are more or less obvious.

Assume that a vector field $U$ on $(\R^n/H,0)$ does not tend to zero in a punctured neighbourhood of the origin.
Let $B_\eps$ be the closed ball of a sufficiently small radius $\eps$ in the ambient space $\R^N$
centred at the origin such that the vector field $U$ is defined on $(\R^n/H)\cap B_{\eps}$ and does not tend
to zero at the points of $\left((\R^n/H)\cap B_{\eps}\right)\setminus \{0\}$. It is not difficult to show that there
exists a vector field $\widetilde{U}$ on $(\R^n/H)\cap B_{\eps}$ such that:
\begin{enumerate}
 \item[1)] $\widetilde{U}$ coincides with $U$ on a neighbourhood of $(\R^n/H)\cap S_{\eps}$ in
 $(\R^n/H)\cap B_{\eps}$, where $S_{\eps}=\partial B_{\eps}$ is the boundary of the ball $B_{\eps}$;
 \item[2)] in a neighbourhood of each singular point $a$ of the vector field $\widetilde{U}$ (equal to the
 Cartesian product of a neighbourhood of the point $a$ in $(\R^n)^{H_a}$ and of a neighbourhood of the origin
 in $(\R^n)^{H_a\bot}/H_a$) the latter is equal to the direct sum $U_a'\oplus U_a''$ of a vector field $U_a'$ on
 $\left((\R^n)^{H_a},0\right)$ with an isolated singular point $a$ and of a radial vector field $U_a''$ on
 $\left((\R^n)^{H_a\bot}/H_a,0\right)$.
\end{enumerate}

\begin{definition}
 {\em The universal index} of an (isolated) singular point $0$ of the vector field $U$ on $(\R^n/H,0)$ is defined as
 \begin{equation}\label{def_indun}
 \ind^{\rm un}_{\R^n/H, 0}U=
 \sum_{a\in{\rm Sing\,}\widetilde{U}} \left(\ind_{(\R^n)^{H_a},a} U_a^{'}\right)\cdot T^{{\mathfrak G}_a}\in\RR
 \end{equation}
 for the vector field $\widetilde{U}$ described above, where $\ind_{(\R^n)^{H_a},a} U_a^{'}$ is the usual index
 of the singular point of the vector field $U_a^{'}$ on the manifold $(\R^n)^{H_a}$. (If the space $(\R^n)^{H_a}$ is
 zero-dimensional (this may take place only in the case $a=0$, $H_a=H$), then the index
 $\ind_{(\R^n)^{H_a},a} U_a^{'}$ is assumed to be equal to one.)
\end{definition}

\begin{proposition}\label{prop:correct}
 The universal index $\ind^{\rm un}_{\R^n/H, 0}U$ is well defined, i.~e.\ the right hand side of Equation
 (\ref{def_indun}) does not depend on the choice of a vector field $\widetilde{U}$.
\end{proposition}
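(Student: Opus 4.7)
The plan is to reinterpret, for each class ${\mathfrak G}\in\GG$, the coefficient of $T^{\mathfrak G}$ on the right-hand side of (\ref{def_indun}) as the ordinary Poincar\'e--Hopf sum of $\widetilde U$ restricted to the smooth stratum $S_{\mathfrak G}:=(\R^n/H)^{({\mathfrak G})}\cap B_{\eps}$, and then to show that this restricted sum is determined by $U$ alone via a Poincar\'e--Hopf argument on a compact submanifold with boundary of $S_{\mathfrak G}$.

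First I would group the summands in (\ref{def_indun}) by the class ${\mathfrak G}_a$ of the singular point $a$, reading the coefficient of $T^{\mathfrak G}$ as
\[
 I_{\mathfrak G}(\widetilde U)\;=\;\sum_{\substack{a\in\Sing\widetilde U\\ {\mathfrak G}_a={\mathfrak G}}}\ind_{(\R^n)^{H_a},a}U_a'.
\]
The local product description of the orbifold near $a$ identifies a neighbourhood of $a$ in $S_{\mathfrak G}$ with a neighbourhood of $a$ in $(\R^n)^{H_a}$, and under this identification the restriction $\widetilde U|_{S_{\mathfrak G}}$ coincides with the tangential component $U_a'$; hence $\ind_{(\R^n)^{H_a},a}U_a'$ is precisely the ordinary index at $a$ of the smooth vector field $\widetilde U|_{S_{\mathfrak G}}$ on $S_{\mathfrak G}$. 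At any singular point $b$ of $\widetilde U$ with ${\mathfrak G}_b\ne{\mathfrak G}$ lying in $\overline{S_{\mathfrak G}}\setminus S_{\mathfrak G}$, the radial component $U_b''$ is nowhere zero on the positive-distance part of the transverse slice $(\R^n)^{H_b\bot}/H_b$; since its direct-sum decomposition with $U_b'$ puts the two summands into independent factors of the ambient tangent space, $\widetilde U|_{S_{\mathfrak G}}$ is nonvanishing in a punctured neighbourhood of $b$. Together these two observations identify $I_{\mathfrak G}(\widetilde U)$ with the full Poincar\'e--Hopf sum of $\widetilde U|_{S_{\mathfrak G}}$ on the smooth manifold $S_{\mathfrak G}$.

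To compare two admissible choices $\widetilde U_1$ and $\widetilde U_2$, I would excise from $S_{\mathfrak G}\cap B_{\eps}$ small ambient-$\R^N$ tubular neighbourhoods of every point of $\overline{S_{\mathfrak G}}\setminus S_{\mathfrak G}$ that is singular for either $\widetilde U_1$ or $\widetilde U_2$. The resulting compact manifold with boundary $M\subset S_{\mathfrak G}$ contains all $S_{\mathfrak G}$-zeros of both vector fields in its interior and carries each $\widetilde U_i$ as a nowhere-zero field on $\partial M$: on the outer piece $M\cap\partial B_{\eps}$ both fields are literally equal to the common $U$; on each inner piece the required non-vanishing is furnished by the radial-outward form of $U_{i,b}''$ when $b\in\Sing\widetilde U_i$, and by continuity of $\widetilde U_i$ together with $\widetilde U_i(b)\ne 0$ when $b\notin\Sing\widetilde U_i$. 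The Poincar\'e--Hopf theorem on $M$ then writes $I_{\mathfrak G}(\widetilde U_i)$ as the Brouwer degree of $\widetilde U_i|_{\partial M}\colon\partial M\to S^{\dim S_{\mathfrak G}-1}$ (in a fixed trivialisation), and the outer-sphere contributions for $i=1,2$ match tautologically. The main obstacle is matching the inner-piece contributions when the lower-stratum singular sets $\Sing\widetilde U_1$ and $\Sing\widetilde U_2$ do not coincide, so that near some $b$ one field has the prescribed radial-outward model while the other is only known to be continuous and nonzero. I would resolve this by shrinking the tubular radii to zero: the radial-outward form of $U_{i,b}''$ together with the local product structure allows one to compute the limiting degree on a small link of $b$ in $S_{\mathfrak G}$ from the transverse slice alone, and the same limit is forced upon the other field by nowhere-vanishing continuity, giving equal inner-piece degrees and hence $I_{\mathfrak G}(\widetilde U_1)=I_{\mathfrak G}(\widetilde U_2)$ for every ${\mathfrak G}$.
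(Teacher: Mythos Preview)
Your strategy---reading the coefficient of $T^{\mathfrak G}$ as a Poincar\'e--Hopf sum on the open stratum $S_{\mathfrak G}$ and comparing two choices of $\widetilde U$ via a degree argument on a compact piece of $S_{\mathfrak G}$---is natural, but there is a genuine gap at the compactness step. The set $M$ you obtain by excising from $S_{\mathfrak G}\cap B_\eps$ only neighbourhoods of the finitely many points of $\overline{S_{\mathfrak G}}\setminus S_{\mathfrak G}$ that are singular for $\widetilde U_1$ or $\widetilde U_2$ is \emph{not} compact in general: the open stratum $S_{\mathfrak G}$ limits onto the entire lower-isotropy locus, not just onto those finitely many points. (Take $H=\Z/2$ acting on $\R^3$ by $(x,y,z)\mapsto(x,-y,-z)$: the lower stratum is the whole $x$-axis, and removing finitely many balls from the principal stratum still leaves a set accumulating on the rest of the axis.) Without compactness the Poincar\'e--Hopf theorem on $M$ is unavailable, and the ``link of $b$ in $S_{\mathfrak G}$'' you invoke is itself a non-closed manifold (in the example above it is an open cylinder, not a sphere), so the limiting degree you appeal to is not defined.

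The paper sidesteps this by passing from the open strata to the \emph{closed} unions $(\R^n/H)^{\mathfrak G}=\bigcup_{{\mathfrak G}'\prec{\mathfrak G}}(\R^n/H)^{({\mathfrak G}')}$. These are compact (inside $B_\eps$) but singular, so instead of the ordinary index one uses the radial index of the restriction $U_{\vert(\R^n/H)^{\mathfrak G}}$, whose well-definedness is already established in \cite{GD}; the coefficient on each open stratum is then recovered by M\"obius inversion over the poset $(\GG,\prec)$. To repair your route you would have to excise a full tubular neighbourhood of all lower strata and control the degrees of both $\widetilde U_i$ on the entire link bundle, including along the portions where neither field has any prescribed normal form; carrying this out carefully essentially reproves the well-definedness of the radial index that the paper simply quotes.
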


\begin{proof}
 For the proof of the statement, let us express the universal index of the vector field $U$ on $(\R^n/H,0)$ in terms
 of the radial indices of the restrictions of the vector field $U$ to certain semialgebraic subsets in $\R^n/H$.
 The fact that the radial index is well defined is proved in \cite{GD}.
 
 On the set $\GG$, let us consider the partial order $\prec$, defined by: ${\mathfrak G}_1\prec {\mathfrak G}_2$
 if (and only if) a representative of the class ${\mathfrak G}_1$ is isomorphic to a subgroup of a representative
 of the class ${\mathfrak G}_2$. Let $\zeta({\mathfrak G}_1,{\mathfrak G}_2)$be the zeta function of the
 partially ordered set $\GG$: $\zeta({\mathfrak G}_1,{\mathfrak G}_2)=1$ if
 ${\mathfrak G}_1\prec {\mathfrak G}_2$ and $\zeta({\mathfrak G}_1,{\mathfrak G}_2)=0$ in the opposite case.
 Let $\mu({\mathfrak G}_1,{\mathfrak G}_2)$ be the M\"obius function of the partially ordered set $\GG$,
 i.~e.\ the function M\"obius inverse to the zeta function:
 $$
 \sum_{{\mathfrak K}:{\mathfrak H}\prec{\mathfrak K}\prec{\mathfrak H}}
 \zeta({\mathfrak H},{\mathfrak K})\mu({\mathfrak K},{\mathfrak G})=\delta_{{\mathfrak H}{\mathfrak G}}\,,
 $$
 where $\delta_{\bullet\bullet}$ is the Kronecker delta (see, e.~g., \cite[Section~2.2]{Hall}).
 
 For ${\mathfrak G}\in \GG$, let us denote by $(\R^n/H)^{\mathfrak G}$ the union of all the strata
 $(\R^n/H)^{({\mathfrak G})}$ with ${\mathfrak G}'\prec{\mathfrak G}$. The semialgebraic set $(\R^n/H)^{\mathfrak G}$
 is closed. (It is not ,in general, an orbifold. It is the union of several (intersecting) orbifolds.) It is not
 difficult to see that the restriction of the vector field $\widetilde{U}$ to $(\R^n/H)^{\mathfrak G}$ is appropriate
 for the computation of the radial index of the restriction $U_{\vert (\R^n/H)^{\mathfrak G}}$ of the vector field
 $U$ to $(\R^n/H)^{\mathfrak G}$ according to the definition from \cite{GD}. Moreover,
 \begin{equation}\label{eqn:radial}
 {\rm ind}^{\rm rad}_{(\R^n/H)^{\mathfrak G},0}U_{\vert (\R^n/H)^{\mathfrak G}}=
 \sum_{a\in (\R^n/H)^{\mathfrak G}}
 \ind_{(\R^n)^{H_a},a} U_a^{'}=
 \sum_{{\mathfrak G}'\prec {\mathfrak G}}\sum_{a\in (\R^n/H)^{({\mathfrak G}')}}
 \ind_{(\R^n)^{H_a},a} U_a^{'}
 \,.
 \end{equation}
 
 By the M\"obius inversion theorem (\cite[Theorem~2.2.1]{Hall}) we have
 $$
 \sum_{a\in (\R^n/H)^{({\mathfrak G})}}\ind_{(\R^n)^{H_a},a} U_a^{'}=
 \sum_{{\mathfrak G}'\prec{\mathfrak G}}\mu({\mathfrak G}',{\mathfrak G})
 {\rm ind}^{\rm rad}_{(\R^n/H)^{{\mathfrak G}'},0}U_{\vert (\R^n/H)^{{\mathfrak G}'}}\,.
 $$
 Therefore
 \begin{equation}\label{eqn:indun-indrad}
 \ind^{\rm un}_{\R^n/H, 0}U=\sum_{\mathfrak G}\left(\sum_{{\mathfrak G}'\prec{\mathfrak G}}
 \mu({\mathfrak G}',{\mathfrak G})
 {\rm ind}^{\rm rad}_{(\R^n/H)^{{\mathfrak G}'},0}U_{\vert (\R^n/H)^{{\mathfrak G}'}}\right)\cdot T^{\mathfrak G}.
 \end{equation}
\end{proof}

For the universal index one has an analogue of the Poincar\'e--Hopf theorem. Let $Q$ be a closed (compact, without
boundary) orbifold and let $U$ be a vector field on $Q$ with isolated singular points (zeroes).

\begin{proposition}\label{prop:P-H}
 One has the equality
 \begin{equation}\label{eqn:uni_P-H}
 \sum_{a\in {\rm Sing\,}U}\ind^{\rm un}_{Q,a}U=\chiun(Q)\,.
 \end{equation}
\end{proposition}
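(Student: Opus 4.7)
The plan is to convert each local universal index into a Möbius-weighted combination of radial indices along the closed semialgebraic strata $Q^{\mathfrak G}$ of the orbifold, sum those radial indices globally by means of the Poincaré–Hopf theorem for the radial index from \cite{GD}, and then Möbius-invert to recover the ordinary Euler characteristics of the locally closed strata $Q^{({\mathfrak G})}$ that define $\chiun(Q)$.

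The starting point is formula (\ref{eqn:indun-indrad}) of Proposition~\ref{prop:correct}. It is purely local, so it applies at every isolated zero $a\in\Sing U$; with the convention that ${\rm ind}^{\rm rad}_{Q^{{\mathfrak G}'},a} U_{\vert Q^{{\mathfrak G}'}}$ equals zero when $a\notin Q^{{\mathfrak G}'}$, it reads
$$
\ind^{\rm un}_{Q,a} U = \sum_{{\mathfrak G}} \left( \sum_{{\mathfrak G}' \prec {\mathfrak G}} \mu({\mathfrak G}',{\mathfrak G})\, {\rm ind}^{\rm rad}_{Q^{{\mathfrak G}'},a} U_{\vert Q^{{\mathfrak G}'}} \right) T^{\mathfrak G}.
$$
Summing over $a\in\Sing U$ and exchanging summations collects, in the inner bracket, the total $\sum_a {\rm ind}^{\rm rad}_{Q^{{\mathfrak G}'},a} U_{\vert Q^{{\mathfrak G}'}}$. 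Since $Q$ is closed, each stratum closure $Q^{{\mathfrak G}'}$ is a compact semialgebraic subset of $Q$ to which $U$ restricts with only isolated singularities (exactly the zeros of $U$ lying in $Q^{{\mathfrak G}'}$), so the radial Poincaré–Hopf theorem of \cite{GD} yields $\sum_a {\rm ind}^{\rm rad}_{Q^{{\mathfrak G}'},a} U_{\vert Q^{{\mathfrak G}'}} = \chi(Q^{{\mathfrak G}'})$.

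To conclude, additivity of the compactly supported Euler characteristic gives $\chi(Q^{{\mathfrak G}'}) = \sum_{{\mathfrak G}'' \prec {\mathfrak G}'} \chi(Q^{({\mathfrak G}'')})$, and the Möbius inversion on $(\GG,\prec)$ already employed in the proof of Proposition~\ref{prop:correct} collapses $\sum_{{\mathfrak G}' \prec {\mathfrak G}} \mu({\mathfrak G}',{\mathfrak G})\, \chi(Q^{{\mathfrak G}'})$ to $\chi(Q^{({\mathfrak G})})$. This produces
$$
\sum_{a \in \Sing U} \ind^{\rm un}_{Q,a} U = \sum_{{\mathfrak G}} \chi(Q^{({\mathfrak G})})\, T^{\mathfrak G} = \chiun(Q),
$$
as required.

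The one step deserving real attention — and the expected main obstacle — is the global application of the radial Poincaré–Hopf theorem on the singular sets $Q^{{\mathfrak G}'}$: these are unions of (possibly intersecting) orbifolds rather than smooth manifolds, so one must verify that the local model vector fields $\widetilde U$ described before Proposition~\ref{prop:correct} can be glued (by a partition of unity compatible with the Whitney stratification of $Q$) into a single field on $Q$ whose restriction to each $Q^{{\mathfrak G}'}$ belongs to the class of vector fields for which \cite{GD} proves the radial Poincaré–Hopf identity. Once this gluing is in place, the remainder of the argument is pure bookkeeping.
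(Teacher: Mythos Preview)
Your argument is correct and matches the paper's proof essentially line for line: both start from equation~(\ref{eqn:indun-indrad}), apply the radial Poincar\'e--Hopf theorem on each closed stratum $Q^{{\mathfrak G}'}$ to turn the sum of radial indices into $\chi(Q^{{\mathfrak G}'})$, and then M\"obius-invert to obtain $\chi(Q^{({\mathfrak G})})$ and hence $\chiun(Q)$. The only cosmetic differences are that the paper runs the computation starting from $\chiun(Q)$ rather than from the sum of indices, cites \cite{EG-EJM} rather than \cite{GD} for the singular Poincar\'e--Hopf step, and does not pause over the gluing issue you flag in your final paragraph.
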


\begin{proof}
The orbifold $Q$ is the disjoint union $\bigcup_{{\mathfrak G}\in\GG}Q^{({\mathfrak G})}$. Let
$Q^{{\mathfrak G}}:=\bigcup_{{\mathfrak G}'\prec{\mathfrak G}}Q^{({\mathfrak G})}$. We have
$$
\chi(Q^{{\mathfrak G}})=\sum_{{\mathfrak G}'\prec{\mathfrak G}}\chi(Q^{({\mathfrak G})})\,,
$$
which implies
$$
\chi(Q^{({\mathfrak G})})=
\sum_{{\mathfrak G}'\prec{\mathfrak G}}\mu({\mathfrak G}',{\mathfrak G})\chi(Q^{{\mathfrak G}'})\,.
$$
Therefore
\begin{equation}\label{eqn:chi_mu}
\chiun(Q)=\sum_{{\mathfrak G}\in\GG} 
\left(\sum_{{\mathfrak G}'\prec{\mathfrak G}}\mu({\mathfrak G}',{\mathfrak G})\chi(Q^{{\mathfrak G}'})\right)
\cdot T^{\mathfrak G}.
\end{equation}
According to the Poincar\'e--Hopf theorem for the (singular, locally algebraic) variety
$Q^{{\mathfrak G}'}$ (see \cite{EG-EJM}) we have
$$
\chi(Q^{{\mathfrak G}'})=
\sum_{a\in Q^{{\mathfrak G}'}} \ind^{\rm rad}_{Q^{{\mathfrak G}'},a}U_{|Q^{{\mathfrak G}'}}\,.
$$
Therefore Equations (\ref{eqn:indun-indrad}) and (\ref{eqn:chi_mu}) imply (\ref{eqn:uni_P-H}).
\end{proof}

\begin{remarks}
 {\bf 1.} The Poincar\'e--Hopf theorem for the Satake index from \cite{Satake2} is the reduction of
 Proposition~\ref{prop:P-H} by the action of the homomorphism $\chi^{\rm ES}:\RR\to\Q$.
 
 {\bf 2.} If $Q$ is a compact orbifold with the boundary $\partial Q$, then analogues of the Poincar\'e--Hopf theorem
 are the following statements: if a vector field $U$ on $Q$ with isolated singular points does not vanish on the
 boundary $\partial Q$ and is derected outside of $Q$ on it, then the sum of the universal indices of the singular
 points of $U$ is equal to the universal Euler characteristic of $Q$; if the vector field $U$ on the boundary
 is directed inside the orbifold $Q$, then the sum of the universal indices of the singular points of $U$ is equal
 to the universal Euler characteristic of the interior of $Q$. For a 1-form $\omega$ on $Q$, an analogue of the
 property of a vector field to be directed outside of (respectively inside) $Q$ is the property that, at singular points
 of the restriction of the 1-form $\omega$ to the boundary $\partial Q$, it is positive (respectively negative) on
 the vectors directed outside of $Q$.
\end{remarks}

\end{document}